\documentclass[11pt,letterpaper]{amsart}
\usepackage[margin=1in]{geometry}
\usepackage{amsmath,amsfonts,amsthm,amssymb}
\usepackage{enumitem,graphicx,pinlabel,thm-restate,tikz,tikz-cd,xcolor}
\usetikzlibrary{positioning}
\usepackage{stmaryrd,comment}
\usepackage[alphabetic]{amsrefs}

\newtheorem{theorem}{Theorem}[section]

\newtheorem{proposition}[theorem]{Proposition}

\newtheorem{question}[theorem]{Question} 
 
\theoremstyle{definition}
\newtheorem{definition}[theorem]{Definition}
\newtheorem{remark}[theorem]{Remark}

\usepackage{xcolor}

\definecolor{darkgreen}{rgb}{0,.5,0}

\newcommand{\ol}[1]{\overline{#1}}

\newcommand{\wt}[1]{\widetilde{#1}}

\newcommand{\N}{\mathbb{N}}
\newcommand{\Z}{\mathbb{Z}}
\newcommand{\Q}{\mathbb{Q}}

\DeclareMathOperator{\Spun}{Spun}

\begin{document}

\title{Non-concordant 2-links in simply-connected 4-manifolds}
\author{Maggie Miller}
\address{Department of Mathematics, The University of Texas at Austin, TX 78712 USA}
\email{maggie.miller.math@gmail.com}
\author{Ryan Stees}
\address{Department of Mathematics, The University of Texas at Austin, TX 78712 USA}
\email{ryan.stees@austin.utexas.edu}
\date{}


\begin{abstract}
We construct an infinite family of smooth 2-component links of 2-spheres $\Sigma_k\subset S^2\times S^2$ whose respective components are smoothly concordant but which are pairwise not topologically concordant in any topological $\Z$-homology cobordism.
In particular, the $\Sigma_k$ are pairwise non-concordant in $(S^2\times S^2)\times[0,1]$.
\end{abstract}


\maketitle

\section{Introduction} \label{sec:intro}

Two smooth (locally flat) surfaces $\Sigma_0, \Sigma_1$ in a 4-manifold are {\emph{concordant}} if there exists a smooth (resp.~locally flat) embedding $C:\Sigma\times [0,1]\hookrightarrow X\times [0,1]$ (for $\Sigma$ an abstract surface) such that $C(\Sigma\times \{0\})=\Sigma_0\times \{0\}$ and $C(\Sigma\times \{1\})=\Sigma_1\times \{1\}$. 
For smooth surfaces, locally flat concordance implies smooth concordance \cite{DaherPowell}, so there is no notion of ``exotic" concordance of surfaces analogous to exotic concordance of knots in 3-manifolds.

The study of concordance of surfaces began with work of Kervaire, who proved that all smooth 2-spheres in $S^4$ are smoothly concordant.

\begin{theorem}[\cite{kervaire}]
    Every 2-knot bounds a $B^3$ into $B^5$.
\end{theorem}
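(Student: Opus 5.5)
The plan is to follow Kervaire's approach: find a Seifert 3-manifold for the 2-knot $K\subset S^4=\partial B^5$, push it into $B^5$, and do ambient surgery on it until it becomes $B^3$ with the same boundary.

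\textbf{Step 1: a Seifert 3-manifold.} Since $K$ has trivial normal bundle, the normal circle bundle is trivial. Since $H^1(S^4\setminus K)\cong\Z$, there is a map $S^4\setminus \nu K\to S^1$ restricting to the projection on $\partial \nu K = K\times S^1$. A regular preimage gives a compact oriented 3-manifold $M\subset S^4$ with $\partial M=K$. Push its interior into $B^5$; then $M$ has trivial normal bundle there, as it is codimension two, orientable, and has a framing.

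\textbf{Step 2: kill $\pi_1(M)$.} Surgery on embedded circles in $\mathrm{int}\, M$ is done ambiently. Each circle bounds an embedded 2-disk in $B^5$, by general position since $2+2<5$. After perturbation that disk meets $M$ only in its boundary. Choosing the framing of the disk compatibly with the normal framing of $M$, we replace $S^1\times D^2\subset M$ by $D^2\times S^1$. This adds 2-handles to $M$ and keeps its boundary equal to $K$. Doing this for a generating set of $\pi_1(M)$, we get $M$ simply connected. One must check that the framing can be chosen so the surgered manifold stays embedded and framed. The relevant obstruction lies in $\pi_1(SO(2))$ or $\pi_1(SO(3))$ and can be absorbed by twisting the disk's normal framing.

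\textbf{Step 3: identify $M$.} Now $M$ is a simply connected compact 3-manifold with boundary $S^2$. Capping it off gives a simply connected closed 3-manifold. Without invoking Perelman, it is cleaner to avoid needing $M\cong B^3$ directly. Instead, note that $M\cup_K B^3$ is a homotopy 3-sphere. By Step 2 we may assume $M$ embeds with trivial normal bundle, so we can take a further approach: do the surgeries so that $M$ becomes a homology ball, then observe that a homotopy ball with boundary $S^2$ is, after the Poincaré conjecture, $B^3$.

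If one prefers a classical argument, one can instead reduce $H_1(M)$ by surgeries, express $M$ as a handlebody with 1- and 2-handles, and cancel handles ambiently in $B^5$. In the smooth category this is exactly Kervaire's argument, with modern input from Perelman to identify the homotopy ball.

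\textbf{Main obstacle.} The delicate step is Step 2: ensuring that ambient surgery does not disturb the embedding and is compatible with the normal framing. In particular, the surgery disk's interior must avoid $M$, and its normal framing must extend the framing along $M$. I would handle this by general position in dimension 5, together with the fact that the normal framing on the disk can be twisted freely to absorb the obstruction.
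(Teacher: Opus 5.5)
The paper does not prove this statement; it quotes it from Kervaire. So your argument has to stand on its own, and Step 2 fails as stated.

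\textbf{The main gap: Step 2 does not produce a simply connected manifold.} Surgery on a circle $\gamma$ in a $3$-manifold is middle-dimensional (Dehn) surgery. The result has fundamental group $\pi_1(M\setminus\nu\gamma)/\langle\langle\gamma'\rangle\rangle$, where $\gamma'$ is the framing curve. This is not a quotient of $\pi_1(M)$: the meridian of $\gamma$ becomes the core of the new solid torus and can be essential. So surgering circles that represent a generating set of $\pi_1(M)$ need not make $M$ simply connected.

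Here is a concrete instance. The core $\gamma$ of a Heegaard solid torus generates $\pi_1(L(5,2))\cong\Z/5$. Yet every integral surgery on $\gamma$ yields a lens space whose $H_1$ has order congruent to $\pm2$ modulo $5$, hence at least $2$. A punctured $L(5,2)$ genuinely occurs as a Seifert $3$-manifold: it is the fiber of the $2$-twist-spun figure-eight knot.

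Your framing remark hides a real constraint. You can always realize \emph{some} framing on each circle. But once the disk's interior misses $M$, only the surgery framings of one parity are ambiently realizable, namely those compatible with the spin structure that the normal framing induces on $M$. Since the resulting $3$-manifold depends on the framing, the link and its framings must be chosen together; nothing is absorbed for free.

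Step 3 cannot compensate. Killing $H_1$ only yields a homology ball (a punctured Poincar\'e sphere is one), not a homotopy ball. The handle-cancellation sentence is not an argument.

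\textbf{A second gap: finding the disks.} A $2$-disk and a $3$-manifold in a $5$-manifold meet generically in isolated points, since $2+3=5$. So general position does not clear the interior of the disk from $M$. This is fixable using the collar.
\begin{itemize}
\item Write the pushed-in $M$ as the graph of a function $f$ over the original Seifert manifold $M_0\subset S^4$, inside a collar $S^4\times[0,1]$.
\item For a circle $\gamma$ lying over $\gamma_0\subset M_0$, take
$D=\{(x,t): x\in\gamma_0,\ f(x)\le t\le c\}\cup(\Delta\times\{c\})$, where $c>\max f$ and $\Delta\subset S^4$ is an embedded disk bounded by $\gamma_0$.
\item This $D$ meets $M$ only along $\gamma$. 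Staggering the levels $c$, together with general position of circles and disks in $S^4$, makes several such disks pairwise disjoint.
\end{itemize}

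\textbf{The missing idea: a $3$-dimensional input that chooses the link.} Let $\mathfrak{s}$ be the spin structure induced on $\widehat M=M\cup_K B^3$. By Kaplan's refinement of $\Omega_3^{\mathrm{spin}}=0$, $(\widehat M,\mathfrak{s})$ is the spin boundary of $B^4$ with $2$-handles attached along an even-framed link. The belt circles of these handles, isotoped off the capping ball, give a framed link $L\subset M$ with the following properties:
\begin{itemize}
\item its framings are $\mathfrak{s}$-compatible, hence ambiently realizable;
\item surgery on $L$ turns $\widehat M$ into $S^3$.
\end{itemize}
Perform these surgeries simultaneously using the disks above. This turns $M$ into $S^3$ minus a ball, which is $B^3$ with boundary $K$. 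No appeal to the Poincar\'e conjecture is needed.
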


\noindent Kervaire's result contrasts the behavior of concordance one dimension lower.
The \emph{(topological) concordance group} of knots in $S^3$ is an infinitely-generated countable abelian group which exhibits rich geometric behavior \cites{Levine,CassonGordon,COT}.

More recently, Sunukjian generalized the work of Kervaire to show that concordance of connected, orientable surfaces in simply-connected 4-manifolds is determined by homology class. This statement holds in both the smooth and locally flat categories.

\begin{theorem}[\cite{sunukjian}]\label{thm:sunukjian}
If $\Sigma_0,\Sigma_1$ are genus-$g$ surfaces in a simply-connected 4-manifold $X$ and $\Sigma_0$, $\Sigma_1$ represent the same element of $H_2(X;\mathbb{Z})$, then $\Sigma_0$ and $\Sigma_1$ are concordant.
\end{theorem}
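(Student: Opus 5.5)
To prove Theorem~\ref{thm:sunukjian}, I would build the concordance in $X\times[0,1]$ by modifying $\Sigma_0$ through moves that each lift to concordances, until it becomes isotopic to $\Sigma_1$. Equivalently, I want to show that $\Sigma_0\sqcup\Sigma_1$ (with a reversed orientation on one copy) cobounds an embedded $\Sigma\times[0,1]$ in $X\times[0,1]$.

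\textbf{Step 1: a null-homologous 3-chain.} Since $[\Sigma_0]=[\Sigma_1]$, the class $[\Sigma_0\times\{0\}]-[\Sigma_1\times\{1\}]$ vanishes in $H_2(X\times[0,1])$. So there is a compact oriented 3-manifold $M\subset X\times[0,1]$ with $\partial M=\Sigma_0\times\{0\}\sqcup -\Sigma_1\times\{1\}$. To get it, I would take a generic map to $S^1$ (or the Pontryagin--Thom/transversality construction), extending the Poincar\'e dual class of the complement, and then take the preimage of a regular value. I would arrange that the projection to $[0,1]$ restricted to $M$ is Morse. Then $M$ decomposes as a sequence of handle attachments to $\Sigma_0\times I$, and each level set is a surface in $X$ changing by index-1 and index-2 moves: tubing surgeries along arcs and compressions along disks.

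\textbf{Step 2: simplify $M$ to a product.} The goal is to trade $M$ for $\Sigma\times I$ by eliminating handles. Index-0 and index-3 handles correspond to births and deaths of small spheres. I would cancel them against 1- and 2-handles or absorb them by tubing. The remaining 1-handles (tubes) and 2-handles (compressions) can be reordered so that all tubes come first. The surface in the middle level is then $\Sigma_0$ with $n$ tubes added, which is the same as $\Sigma_1$ with $n$ tubes added. The key claim is that adding a tube along an arc to $\Sigma$ inside a connected surface, where the compressing disk is also available, is undone up to isotopy. More precisely, I would show that two surfaces which become isotopic after stabilization are concordant, by using the extra $[0,1]$ direction to realize the stabilizations and destabilizations as a genus-preserving concordance.

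\textbf{Main obstacle.} The hard step is controlling the compressing disks of the 2-handles, since they may intersect the surface and each other. This is exactly where simple connectivity is used. Any disk's boundary circle is null-homotopic in $X$. Intersections of the disk interiors with the surface can be removed by tubing the surface to itself along a disk (at the cost of genus) or by pushing intersections into the fifth dimension, where 2-disks in a 5-manifold generically avoid a 3-dimensional concordance track after perturbation. So I would try first to work genuinely in dimension $5$. A 2-handle's core disk is 2-dimensional and the concordance $\Sigma\times I$ is 3-dimensional, so their intersections in the 5-manifold are isolated points. These can be cancelled in pairs by Whitney moves, which are available since $\pi_1=1$. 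Any algebraic excess can be fixed by adjusting the framing or the homology class of the disk. With the disks embedded and disjoint, the tubes and compressions pair off, and the level surfaces sweep out an embedded $\Sigma\times[0,1]$. The same argument works in the locally flat category, using topological transversality.
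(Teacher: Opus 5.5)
The paper does not prove this statement. It is quoted from Sunukjian and used as a black box, so your outline has to stand on its own. Your framework (ambient surgery on a cobounding 3-manifold in $X\times[0,1]$) is reasonable, but as written the outline has two genuine gaps.

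\textbf{Step~2 is circular.} Putting the movie of $M$ in the order births, tubes, compressions, deaths only shows that $\Sigma_0$ and $\Sigma_1$ become isotopic after adding tubes. Your ``key claim'' that surfaces which become isotopic after stabilization are concordant is the theorem restated. Such a stable isotopy is precisely a 3-manifold in $X\times[0,1]$ built from $\Sigma_0\times[0,1]$ by $n$ one-handles and $n$ two-handles, which is where you started. The tubes and compressions do not pair off by themselves: on the middle surface, the attaching circles of the compressions need not be geometrically dual to the belt circles of the tubes. So $M$ has to be genuinely changed (e.g.\ by ambient surgeries along circles in $M$), and you never say which surgeries, with which framings, produce $\Sigma\times[0,1]$. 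Two smaller points. The ``core disk of a 2-handle'' lies in $M$, so presumably you mean surgery disks bounded by circles in $M$. In Step~1, a map to $S^1$ has $4$-dimensional fibres in $X\times[0,1]$; you want a transverse section of a complex line bundle instead.

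\textbf{The step meant to make these surgeries possible fails.} As you yourself then note, a generic $2$-disk meets a $3$-manifold in a $5$-manifold in isolated points, so the claim that it avoids $M$ is false.

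\emph{The Whitney trick is not available from $\pi_1(X)=1$.} The Whitney disks you invoke are again $2$-disks that generically meet $M$ in points. Because $M$ has codimension $2$, the relevant group is $\pi_1((X\times[0,1])\setminus M)$, not $\pi_1(X)$. Simple connectivity of $X$ only says this group is normally generated by meridians of $M$, and it is typically nontrivial. For instance, if $X$ is closed and $[\Sigma_i]=0$, then $[M]=0$ in $H_3(X\times[0,1],X\times\{0,1\})$, so a meridian of $M$ has infinite order in $H_1$ of the complement.

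\emph{The algebraic excess cannot be fixed freely.} For a fixed disk, ambient surgery in dimension $5$ realizes only surgery framings of one parity. Boundary-twisting the disk flips this parity while changing its intersection number with $M$ by one. Tubing into a sphere $S$ changes the intersection number by $\pm[\Sigma_0]\cdot S$ and the parity by $S\cdot S$ mod $2$. So when $[\Sigma_0]$ is characteristic, the count plus the framing parity is invariant mod $2$: a Rokhlin-type obstruction that your outline does not address.

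\textbf{What is missing} is an actual mechanism that produces embedded surgery disks with interiors disjoint from $M$ and controlled framings. A model is Kervaire's argument for $S^4$: pushing a Seifert manifold into $B^5$ makes the complement's fundamental group infinite cyclic, generated by a meridian. Push-offs of curves in the Seifert manifold then bound disks disjoint from it.
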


One of the motivating questions of this paper is the following well-known open problem (see e.g. \cite{cochranslice}, \cite{k3}*{Problem 4.40}).

\begin{question} \label{q:slice}
    Is every link of 2-spheres in $S^4$ slice?
\end{question}

More broadly, one may expect the behavior of link concordance in dimensions 3 and 4 to parallel the aforementioned results in knot concordance.
Even beyond the rich behavior of knot concordance in $S^3$, there are additional obstructions to link concordance, including Milnor's $\ol\mu$-invariants, which are higher-order linking numbers \cite{Milnor}.
While these invariants have been extended to surface links in $S^4$ \cite{AMY21}, they can only be nontrivial when at least one component has positive genus and therefore fail to yield counterexamples to Question~\ref{q:slice}.
This appears to provide evidence that the vast difference between the behavior of knot concordance in $S^3$ and that of 2-knot concordance in simply-connected 4-manifolds extends to link concordance and 2-link concordance.

In this paper, we show such an analogy does not hold in general.
Surprisingly, invariants analogous to Milnor's can obstruct concordance of links of spheres in other simply-connected 4-manifolds.

\begin{restatable}{theorem}{main} \label{thm:main}
There exists a nullhomologous 2-sphere $S$ smoothly embedded in $S^2\times S^2$ and nullhomologous 2-spheres $S_k\mid k\in\mathbb{Z}^{\ge 0}$ smoothly embedded in $S^2\times S^2\setminus S$ such that whenever $m\neq n$, the links $\Sigma_m=S\sqcup S_m$ and $\Sigma_n=S\sqcup S_n$ are not concordant in any topological $\Z$-homology cobordism from $S^2\times S^2$ to itself.
\end{restatable}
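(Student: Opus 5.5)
We will build the links so that the complement of the fixed component $S$ has nontrivial fundamental group, and then distinguish the second components by a Milnor-type invariant. This invariant records how $S_k$ sits relative to $\pi_1$ of the complement of $S$.

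\textbf{Construction.} We first choose a nullhomologous sphere $S\subset S^2\times S^2$ whose complement $X_S=S^2\times S^2\setminus \nu S$ has a controllable, nonabelian, non-simply-connected fundamental group $G$. A natural candidate is a sphere obtained by a knotting construction, for instance a spun or twist-spun knot placed in a ball and then tubed to geometrically dual spheres, so that the meridian survives but $G$ retains some structure. Next we take a fixed nullhomologous sphere $S_0$ in $X_S$. We set $S_k$ to be $S_0$ tubed (or banded) along a loop representing a carefully chosen element $g^k$, or a commutator word $w_k\in G$. Then $S_k$ is isotopic to $S_0$ in $S^2\times S^2$, so it is concordant to $S_0$ there by Theorem~\ref{thm:sunukjian}. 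However, its position relative to $S$ changes. We will make sure each $S_k$ is nullhomologous and that the meridian of $S_k$ is nullhomotopic in the ambient manifold.

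\textbf{Invariant.} To a link $S\sqcup S'$ we associate the quotient $\pi_1(X\setminus \nu(S\sqcup S'))/\Gamma_q$ by a term of the lower central series (or a rational derived series), together with the class of the meridian and "longitude" data. Here the longitude data means the element of a nilpotent quotient of $\pi_1(X_S)$ that measures how a meridian of $S$ interacts with $S'$. Concretely, this is an equivariant intersection number of $S'$ with a lift of a dual to $S$ in a nilpotent cover of $X_S$, taken in $\Z[G/G_q]$ modulo indeterminacy.

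We then prove invariance under concordance in a topological $\Z$-homology cobordism $W$, using Stallings' theorem. A concordance exterior $E$ has inclusions of both link exteriors that induce isomorphisms on $H_1$ and surjections on $H_2$. This follows from Alexander duality and the homology cobordism hypothesis, so Stallings gives isomorphisms of all nilpotent quotients. Since the annuli are locally flat, a Mayer–Vietoris argument carries the equivariant intersection invariant across $W$. This is where the $\Z$-homology cobordism hypothesis enters; smoothness is not needed.

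\textbf{Computation and main obstacle.} The final step is to compute the invariant for $\Sigma_k$ and show that it takes distinct values for distinct $k$, for example $k$ times a fixed nonzero element in a torsion-free nilpotent quotient. We must also control indeterminacy from choices of basing and from automorphisms of the nilpotent quotient. We expect this to be the main difficulty. We must verify the $H_2$-surjectivity needed for Stallings in the presence of the $S^2\times S^2$ homology, where the nullhomologous condition makes the duality work. We must also find a $G$ where the relevant quotient is computable yet nontrivial. We would first try a $G$ whose nilpotent quotients are those of a free or Heisenberg-type group, computed via a Kirby diagram, with $\Gamma_2/\Gamma_3\cong\Z$ detecting $k$.
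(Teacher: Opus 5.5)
Your proposal is a program rather than a proof. The three steps that carry all of the content are left open: a concrete choice of $S$ and $S_k$, a concordance invariant that is actually well defined, and a computation showing it separates the $\Sigma_k$. The concrete suggestions you do make fail.
\begin{itemize}
\item Tubing a 2-knot into a sphere that has a geometric dual (generically missed by the tube) produces a sphere with that same dual. Its meridian bounds the punctured dual, so its complement in the simply connected $S^2\times S^2$ is simply connected. The sphere is also not nullhomologous. So the meridian does not ``survive'' and $G$ is trivial.
\item The operation ``tube $S_0$ along a loop representing $g^k$'' is never specified. You do not say what $S_0$ is tubed into, nor why the result remains a sphere that is isotopic to $S_0$ in $S^2\times S^2$ but changes in $E_S$.
\item Your ``longitude data'' is undefined: a nullhomologous $S$ has no dual surface. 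Even granting some choice, you give no argument that an equivariant intersection number in $\Z[G/G_q]$ is unchanged under $\Z$-homology concordance. Stallings' theorem only identifies the nilpotent quotients $\pi/\pi_q$. A $\Z$-homology cobordism rel boundary is in general not a $\Z[G/G_q]$-homology cobordism; Alexander modules of concordant knots already differ.
\item The hope that a fixed quotient such as $\Gamma_2/\Gamma_3$ detects $k$ does not match how such examples behave. In the paper's family every link group satisfies $\pi/\pi_3\cong F/F_3$ via a meridian-preserving isomorphism. The index $k$ is read off only from the length $q$ at which meridian-preserving isomorphisms $\pi/\pi_q\cong F/F_q$ cease to exist.
\end{itemize}

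The missing idea is a mechanism that imports a known nonvanishing Milnor invariant into the 2-sphere setting, where there are no geometric longitudes. The paper does this in three steps.
\begin{itemize}
\item Start from Milnor's links $L_n$, which have $\ol\mu(1\cdots122)=(-1)^n$ of length $2n+2$. Performing $0$-surgery on $L_n^1$ gives knots $K_n\subset S^1\times S^2$. These admit a $(2n+1)$-basing but no $(2n+2)$-basing relative to the local unknot, by the Chen--Milnor presentation and Hopficity of $F/F_{2n+2}$.
\item For the spins $S_n'\subset\Spun(S^1\times S^2)$ the knot group is unchanged, and the image $\lambda$ of the knot longitude still commutes with the meridian. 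So for any meridian-preserving isomorphism $\phi:\pi/\pi_{2n+3}\to F/F_{2n+3}$, the Magnus expansion forces $\phi(\lambda)\in F_{2n+2}$. This recovers the forbidden $(2n+2)$-basing of the knots, at the cost of one filtration step.
\item Surgery on $S^1\times\{\ast\}\times\{\ast\}$ turns $\Spun(S^1\times S^2)$ into $S^2\times S^2$, with $S$ the belt sphere. A $\Z$-homology concordance from $S\sqcup S_m$ to $S\sqcup S_n$ can be surgered back along the annulus traced by $S$. This yields a $\Z$-homology concordance from $S_m'$ to $S_n'$, reducing the link problem to the 2-knot problem.
\end{itemize}
In particular, $\pi_1(E_S)\cong\Z$ suffices; no nonabelian $G$ and no equivariant intersection data are needed.
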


\noindent Note that by \cite{sunukjian}, the $S_k$ are (smoothly) concordant in $(S^2\times S^2)\times[0,1]$, so corresponding components of the $\Sigma_k$ are concordant.

In particular, there are many links of nullhomotopic 2-spheres in $S^2\times S^2$ that are not slice in $(S^2\times S^2)\times[0,1]$.
As we explain at the end of this paper, the strategy of Theorem~\ref{thm:main} can be used to construct examples of non-concordant 2-links in many other simply-connected 4-manifolds but cannot obstruct the sliceness of a 2-link in $S^4$.
Before proving Theorem~\ref{thm:main}, we provide further background on concordance of surfaces and Milnor invariants in Section~\ref{sec:background}.
Then, in Section~\ref{sec:main}, we gradually move toward the proof, beginning with a family of links in $S^3$, obstructing the concordance of a related family of knots in $S^1\times S^2$, studying knotted 2-spheres which are the spins of these knots and, finally, producing the links of 2-spheres $\Sigma_k\subset S^2\times S^2$ through surgery.

\subsection*{Notation}

We will denote the regular neighborhood of a submanifold $Y$ in a manifold $X$ by $\nu Y$ and the exterior of $Y$ in $X$ by $E_Y$.

\subsection*{Acknowledgements}

MM was partially supported by a Sloan Research Fellowship, and NSF grant DMS-2404810. Both MM and RS were partially supported by a Packard Fellowship for Science and Engineering.

\subsection*{Tool use}
No AI tools were used in this project.


\section{Background} \label{sec:background}


\subsection{Background on concordance of surfaces} \label{subsec:backgroundconc}

Let $R$ be a ring, and let $X_0$ and $X_1$ be oriented topological 4-manifolds which are (topologically) $R$-homology cobordant, that is, there exists an oriented topological 5-manifold $W$ with $\partial W=-X_0\sqcup X_1$ such that the inclusions $X_0\hookrightarrow W$ and $X_1\hookrightarrow W$ induce $R$-homology isomorphisms.
Let $\Sigma_0=\sqcup_{i=1}^m\Sigma_0^i\subset X_0$ and $\Sigma_1=\sqcup_{i=1}^m\Sigma_1^i\subset X_1$ be $m$-component locally flat, oriented surfaces with closed components.

\begin{definition} \label{def:concordance}
We say $\Sigma_0$ and $\Sigma_1$ are \emph{(topologically) $R$-homology concordant} if there exists an $m$-component oriented surface $\Sigma=\sqcup_{i=1}^m\Sigma^i$ with closed components and a locally flat, proper embedding \[C:\Sigma\times[0,1]\hookrightarrow W\] into some $R$-homology cobordism $W$ from $X_0$ to $X_1$ such that $C(\Sigma^i\times\{0\})=\Sigma_0^i$ and $C(\Sigma^i\times\{1\})=\Sigma_1^i$ for each $1\leq i\leq m$.
If $X_0=X_1$ and $W=X_0\times[0,1]$, we say $\Sigma_0$ and $\Sigma_1$ are \emph{concordant}.
\end{definition}

In the above definition, if $X_0$ and $X_1$ are instead smooth 4-manifolds which are smoothly $R$-homology cobordant, and $\Sigma_0$ and $\Sigma_1$ are smoothly embedded surfaces, then work of Daher--Powell implies $C$ may be promoted to a smooth embedding. 
In other words, concordance implies \emph{smooth concordance} for smooth surfaces in smoothly homology cobordant smooth 4-manifolds \cite{DaherPowell}.

In the setting of surfaces in a 4-manifold $X$ with $\pi_1 (X)\neq 1$, there are a few invariants that can potentially obstruct topological concordance. The {\emph{Freedman--Quinn invariant}} (\cite{freedmanquinn}, c.f.\ \cite{schneidermanteichner}) and {\emph{Stong invariant}} (\cite{stong}, in which it is called the ``Kervaire--Milnor invariant") can each obstruct concordance of knotted 2-spheres $S_0, S_1$ in $X$. Both invariants are only defined when there exists some immersed sphere $G\subset X$ intersecting $S_i$ in an odd number of points. The Freedman--Quinn invariant of $S_0, S_1$ is then valued in a quotient of the ring $\mathbb{F}_2[\{g\in \pi_1 (X)\mid g\neq 1, g^2=1\}]$. The Stong invariant automatically vanishes when the Freedman--Quinn invariant is nontrivial, but otherwise is valued in a quotient of $H_1(X;\mathbb{Z}/2\mathbb{Z})$.

Both of these invariants have somewhat restrictive use, but nevertheless lead to examples of 2-spheres in a 4-manifold that are homotopic but not concordant \cites{schneidermanteichner,stong,schwartz,klugmiller_1,klugmiller_2}. 
In \cite{millerklugstong}*{\S 5}, the first author (in joint work with M.~Klug) investigated whether the Stong invariant could prove a version of Theorem~\ref{thm:main}: do there exist componentwise homologous but non-concordant links of 2-spheres in some simply-connected 4-manifold?  They showed that the Stong invariant could, in principle, obstruct concordance from a link of null-homologous 2-spheres to the unlink. However, in order to obstruct sliceness there are extra constraints on the link. The authors were unable to find an example satisfying these conditions for which the Stong invariant is nontrivial. Conversely, Stong \cite{stong} and \cite{millerklugstong} produce examples of pairs of homologous, homologically essential 2-spheres in simply-connected 4-manifolds distinguished by the Stong invariant, but for nontrivial homology classes the discussion of \cite{millerklugstong}*{\S 5} does not apply.


\subsection{Background on Milnor invariants} \label{subsec:backgroundMilnor}

The concordance obstruction we will apply to prove Theorem~\ref{thm:main} relies on \emph{Milnor invariants} of knots in non-simply-connected, closed, orientable 3-manifolds \cites{MillerD,Heck11,Kuzbary,Stees26}.
These invariants extend Milnor's $\ol{\mu}$-invariants \cite{Milnor}, which are higher-order linking numbers for links in $S^3$.
We will take the perspective of \cite{Stees26}, which provides a convenient language for passing from knots in 3-manifolds to 2-knots in (spun) 4-manifolds, although the invariants of \cite{Kuzbary} can also be applied to obstruct concordance of the family of knots $K_k\subset S^1\times S^2$ we consider in Section~\ref{subsec:knots}.

When we use the term \emph{Milnor invariants}, we are typically refering to a broad collection of invariants which extract concordance information from knot or link groups using suitable lower central series.
Indeed, Cappell--Shaneson observed that the exterior of a concordance between links $L$ and $L'$ in the orientable 3-manifold $M$ is a homology cobordism rel boundary \cite{CappellShaneson74}.
An application of the Stallings--Dwyer Theorem \cites{Stallings,Dwyer} then implies that the quotients of the link groups $\pi:=\pi_1(E_L)$ and $\pi':=\pi_1(E_{L'})$ by their lower central subgroups, along with peripheral data, are concordance invariants.
Recall that the (classical) \emph{lower central series} of a group $\pi$ is the series of normal subgroups \[\cdots\trianglelefteq \pi_{n+1}\trianglelefteq \pi_n\trianglelefteq\cdots\trianglelefteq \pi_2\trianglelefteq \pi_1=\pi,\] where $\pi_2=[\pi,\pi]$, the commutator subgroup of $\pi$, and $\pi_{n+1}=[\pi,\pi_n]$ in general. 
We call the quotients $\pi/\pi_n$ the \emph{lower central quotients} of $\pi$. 

The above conclusion concerning the lower central quotients of link groups still holds if we replace concordance in $M\times[0,1]$ with concordance in a $\Z$-homology cobordism, where $L'$ is a link in a 3-manifold $M'$ which is $\Z$-homology cobordant to $M$.
Cappell--Shaneson observed more generally that the exterior of a concordance in a $\Z[G]$-homology cobordism is a homology cobordism rel boundary with coefficients in $\Z[G]$, so the application of the Stallings--Dwyer Theorem can be refined to a statement about the \emph{$G$-lower central series} of link groups.
In particular, we may use the $\pi_1(M)$-lower central series to study concordance in $M\times[0,1]$ \cites{Heck11,Stees26}.
We do not elaborate further on this general framework, as this paper only concerns concordance in $\Z$-homology cobordisms.

The conclusion that the lower central quotients of link groups along with peripheral data are concordance invariants is formalized in the following definition.

\begin{definition}[\cite{Stees26}*{Definition 3.1}] \label{def:basinglinks}
Let $M$ and $M'$ be closed oriented 3-manifolds.
Fix an $m$-component link $L\subset M$, and let $L'\subset M'$ be another $m$-component link.
For $n\in\N$, an \emph{$n$-basing} for $L'$ relative to $L$ is an ordered pair $(\phi,\phi_\partial)$ such that
\begin{enumerate}[label=(\arabic*)]
\item $\phi_\partial:H_1(\partial E_{L'})\xrightarrow{\cong}H_1(\partial E_L)$ is an isomorphism preserving meridians, orientations, and orderings of components, and
\item $\phi:\pi'/\pi'_n\xrightarrow{\cong}\pi/\pi_n$ is an isomorphism such that $\phi_\partial$ is \emph{compatible} with $\phi$, that is, given any basing $\tau'$ for $L'$, there exists a basing $\tau$ for $L$ such that the following diagram commutes:
\begin{center}
\begin{tikzcd}
\pi_1(\partial E_{L'}\cup\tau') \arrow[r, "\phi_{\partial*}"', "\cong"] \arrow[d] & \pi_1(\partial E_L\cup\tau) \arrow[d] \\
\pi' \arrow[d, twoheadrightarrow] & \pi \arrow[d, twoheadrightarrow] \\
\pi'/\pi'_n \arrow[r, "\cong", "\phi"'] & \pi/\pi_n.
\end{tikzcd}
\end{center}
\end{enumerate}
\end{definition}

In \cite{Stees26}, the above is called an \emph{$n$-basing over the trivial group} in order to distinguish invariants of $\Z[G]$-homology concordance for different groups $G$.
We will sometimes suppress notation and write $\phi$ instead of $(\phi,\phi_\partial)$.
Note that condition (2) above is satisfied for any basing $\tau'$ if and only if it is satisfied for a single basing.
We list some of the relevant properties of $n$-basings \cite{Stees26}:
\begin{itemize}
\item If $L'$ is $\Z$-homology concordant to $L$, an $n$-basing exists for every $n$ \cite{Stees26}*{Theorem A$'$(1)}.
\item An $n$-basing induces a $k$-basing for all $k\leq n$. 
\item An $m$-component link $L\subset S^3$ admits an $n$-basing relative to the $m$-component unlink if and only if $L$ has vanishing Milnor $\ol{\mu}$-invariants of length $n$ \cite{Stees26}*{Lemma 8.1}.
\end{itemize}

While Milnor invariants of $\Z$-homology concordance vanish for knots in $S^3$ (and, more generally, knots in $\Z$-homology 3-spheres), admitting an $n$-basing relative to a fixed knot in some other 3-manifold is a highly nontrivial condition \cites{MillerD,Kuzbary,Stees25a}.
In order to prove Theorem~\ref{thm:main}, we will begin with a family of knots $K_n\subset S^1\times S^2$ such that $K_0$ is the local unknot and for each $n>0$ the knot $K_n$ admits a $(2n+1)$-basing, but not a $(2n+2)$-basing, relative to $K_0$.
This implies the $K_n$ are pairwise non-concordant in $\Z$-homology cobordisms from $S^1\times S^2$ to itself; see Proposition~\ref{prop:knots}.
We will then consider the 2-spheres $S_n'=\Spun(K_n)\subset\Spun(S^1\times S^2)$ and obstruct the following analogous notion of $n$-basing.

\begin{definition} \label{def:basing2links}
Let $X$ and $X'$ be closed oriented topological 4-manifolds.
Fix an $m$-component link of locally flat 2-spheres $\Sigma\subset X$, and let $\Sigma'\subset X'$ be another locally flat $m$-component 2-link.
Assume $\Sigma$ and $\Sigma'$ have trivial normal bundles.
Denote the link groups of $\Sigma$ and $\Sigma'$ by $\pi$ and $\pi'$, respectively.
An \emph{$n$-basing} for $\Sigma'$ relative to $\Sigma$ is an isomorphism \[\phi:\pi'/\pi'_n\xrightarrow{\cong}\pi/\pi_n\] which preserves meridians, that is, given any basing $\tau'$ for $\Sigma'$, there exists a basing $\tau$ for $\Sigma$ such that $\phi$ preserves the ordered based meridians induced by $\tau'$ and $\tau$.
\end{definition}

In future work, we will consider an extension of Definition~\ref{def:basing2links} to more general surface links.

\begin{proposition}
A topological concordance between $\Sigma$ and $\Sigma'$ in a topological $\Z$-homology cobordism between $X$ and $X'$ induces an $n$-basing for $\Sigma'$ relative to $\Sigma$ for each $n$.
\end{proposition}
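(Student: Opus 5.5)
The plan is the standard Cappell--Shaneson/Stallings--Dwyer argument, adapted to 2-links. Let $W$ be a topological $\Z$-homology cobordism from $X$ to $X'$, and let $C\colon \sqcup_{i=1}^m S^2\times[0,1]\hookrightarrow W$ be a locally flat concordance from $\Sigma$ to $\Sigma'$. Locally flat codimension-two submanifolds of a topological 5-manifold have normal bundles; the needed topological normal bundle theory is available in this dimension by work of Freedman--Quinn and Kirby--Siebenmann. So $C$ has a tubular neighborhood $\nu C\cong \sqcup_i S^2\times[0,1]\times D^2$ that restricts on the ends to the given trivial normal bundles of $\Sigma$ and $\Sigma'$. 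Set $E_C=W\setminus \mathring{\nu} C$. Then $E_C$ is a cobordism between $E_\Sigma$ and $E_{\Sigma'}$ relative to the boundary piece $\sqcup_i S^2\times S^1\times[0,1]$.

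The first key step is to show that the inclusions $E_\Sigma\hookrightarrow E_C$ and $E_{\Sigma'}\hookrightarrow E_C$ induce isomorphisms on $H_1(-;\Z)$ and epimorphisms on $H_2(-;\Z)$. I would prove this with Mayer--Vietoris for $W=E_C\cup\nu C$ and $X=E_\Sigma\cup\nu\Sigma$, compared via the natural maps. The inclusions $\nu\Sigma\hookrightarrow\nu C$ and $\partial\nu\Sigma\hookrightarrow\partial_{\text{side}}\nu C$ are homotopy equivalences, and $X\hookrightarrow W$ is a $\Z$-homology isomorphism by hypothesis. The five lemma then shows that $H_*(E_\Sigma)\to H_*(E_C)$ is an isomorphism in all degrees, and likewise from the $X'$ side. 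Equivalently, excision identifies $H_*(E_C,E_\Sigma)$ with $H_*(W,X\cup\nu C)$, and this vanishes by the long exact sequence of the triple together with the homotopy equivalence $\nu C\simeq \nu\Sigma$. Either route is routine, but I will need to check the $H_1$ and $H_2$ bookkeeping carefully, including that the meridian classes match up.

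The second step is to apply the Stallings--Dwyer theorem (Stallings suffices here, since we have an $H_1$-isomorphism and an $H_2$-epimorphism). Its conclusion is that $\pi/\pi_n\to\pi_1(E_C)/\pi_1(E_C)_n$ and $\pi'/\pi'_n\to\pi_1(E_C)/\pi_1(E_C)_n$ are isomorphisms for every $n$, where basepoints are joined by a path in $E_C$. I then define $\phi$ as the composite of the second isomorphism with the inverse of the first.

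The third step is meridian preservation. Given a basing $\tau'$ for $\Sigma'$, namely arcs from the basepoint to the boundary tori-components, extend each arc across $E_C$ as follows. Run it along the boundary product $\partial\nu C\cong\sqcup S^2\times S^1\times[0,1]$ via the $[0,1]$-direction, then back to the basepoint of $E_\Sigma$ by a path. This produces arcs $\tau$ for $\Sigma$. The $i$-th based meridian of $\Sigma'$ is freely homotopic in $\partial\nu C$ to the $i$-th meridian of $\Sigma$ through the annulus $\{pt\}\times S^1\times[0,1]$, and this homotopy is compatible with the chosen arcs. Hence the two based meridians become equal in $\pi_1(E_C)$, so $\phi$ sends the ordered based meridians of $\tau'$ to those of $\tau$. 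Orientations match because $C$ is oriented consistently with $\Sigma$ and $\Sigma'$.

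I expect the main obstacle to be technical rather than conceptual: justifying the existence of a normal bundle for the locally flat concordance in the topological category, so that $E_C$ is a manifold with the product structure on its side boundary. I would first cite the existence of normal bundles for locally flat codimension-two submanifolds away from dimension 4 (Kirby--Siebenmann, Freedman--Quinn 9.3). Failing that, I would work with homotopy-theoretic complements, which suffice for the homology computation.
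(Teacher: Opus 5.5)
Your first two steps agree with the paper. $E_C$ is a $\Z$-homology cobordism rel boundary, and Stallings' theorem gives $\pi/\pi_n\cong\pi^C/\pi^C_n\cong\pi'/\pi'_n$ with $\pi^C=\pi_1(E_C)$; the paper spells out the passage from spaces to groups via Hopf's theorem. Then $\phi$ is the composite.

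The gap is in your third step, which is where the one non-formal idea of the paper's proof sits. A basing $\tau$ for $\Sigma$ must consist of arcs in $E_\Sigma\subset X$, so that its based meridians lie in $\pi$. The arc you construct lies in $E_C$, not in $E_\Sigma$: it is the basepoint path across $E_C$, then $\tau'_i$, then the $[0,1]$-direction of the side boundary of $\nu C$. Suppose instead you close up with some arc $\tau_i\subset E_\Sigma$. Then the based meridian of $\Sigma_i$ via $\tau_i$ and the transported based meridian of $\Sigma'_i$ via $\tau'_i$ differ in $\pi^C$ by conjugation by a loop $g_i\in\pi^C$. This loop runs out along $\tau_i$, across $\partial\nu C$, and back along $\tau'_i$ and the basepoint path.

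Nothing forces $g_i$ to commute with the meridian. Re-choosing $\tau_i$ cannot repair this in general, because it only changes $g_i$ by elements of the image of $\pi\to\pi^C$, which need not be surjective. For instance, take the standard ribbon concordance from the unknot $U\subset S^4$ to the spun trefoil. Here $\pi^C$ is the trefoil group, while the image of $\pi_1(E_U)$ is the cyclic subgroup generated by a meridian. Varying $\tau'$ conjugates the based meridian of the spun trefoil by arbitrary elements of $\pi^C$, and a meridian of the nonabelian, meridionally generated trefoil group is not central. So for suitable $\tau'$ no basing of $U$ makes the based meridians agree in $\pi^C$. Your claim that they become equal in $\pi_1(E_C)$ is therefore false in general.

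What Stallings does give is surjectivity of $\pi\to\pi^C/\pi^C_n$. That is all Definition~\ref{def:basing2links} needs, since $\phi$ only sees $n$th lower central quotients. The paper's fix runs as follows:
\begin{enumerate}
\item Start from any basing $\tau_0$ of $\Sigma$ and form the loops $g_i$ as above.
\item Pick $h_i\in\pi$ whose image is congruent to $g_i$ modulo $\pi^C_n$.
\item Replace $\tau_{0,i}$ by the arc that first traverses a loop in $E_\Sigma$ representing $h_i^{-1}$ and then $\tau_{0,i}$. This is the paper's $\tau=\gamma_0^{-1}\tau_0$.
\end{enumerate}
The new connecting loop is $h_i^{-1}g_i\in\pi^C_n$, so the based meridians agree in $\pi^C/\pi^C_n$. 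This is precisely meridian preservation for $\phi$; note that $\tau$ depends on $n$.

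By contrast, the topological normal bundle you single out as the main obstacle is a side issue. The paper takes $\nu C$ for granted, and ambient dimension $5$ is covered by Kirby--Siebenmann.
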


See \cite{Stees26}*{Corollary 4.3} for a similar statement for links in 3-manifolds.

\begin{proof}
Suppose the $m$-component links $\Sigma\subset X$ and $\Sigma'\subset X'$ are concordant in the $\Z$-homology cobordism $W$ via the concordance $C$.
Take the basepoint of $X$ to also be the basepoint of $W$, and choose a path $\alpha$ from the basepoint of $X$ to the basepoint of $X'$.
Assume the basepoints of $X$ and $X'$ are outside $\nu C$, and assume $\alpha$ misses $\nu C$.
Let $E_C^\times$ denote the reduced space $E_C/\alpha$, which is homotopy equivalent to $E_C$ and conveniently identifies the basepoints of $X$ and $X'$.
Let $\pi$, $\pi'$, and $\pi^C$ denote the fundamental groups of $E_\Sigma$, $E_{\Sigma'}$, and $E_C^\times$, respectively.

As $E_C$ is a homology cobordism rel boundary, the inclusions $E_\Sigma\hookrightarrow E_C$ and $E_{\Sigma'}\hookrightarrow E_C$ induce isomorphisms on $H_1(-)$ and $H_2(-)$.
By Hopf's Theorem for path-connected spaces (see \cite{Brown}*{Section II.5} and \cite{EilenbergMacLane}), for any path-connected space $A$, $H_1(A)\to H_1(\pi_1(A))$ is an isomorphism and $H_2(A)\to H_2(\pi_1(A))$ is a surjection, so these inclusions induce isomorphisms \[H_1(\pi)\xrightarrow{\cong} H_1(\pi^C)\xleftarrow{\cong} H_1(\pi')\] and surjections \[H_2(\pi)\twoheadrightarrow H_2(\pi^C)\twoheadleftarrow H_2(\pi').\]
By the Stallings--Dwyer Theorem \cites{Stallings,Dwyer}, we obtain isomorphisms for all $n$ \[\pi/\pi_n\xrightarrow{\cong}\pi^C/\pi^C_n\xleftarrow{\cong}\pi'/\pi'_n.\]

Now fix $n$, and let $\tau_0$ and $\tau'$ be any basings in $X$ and $X'$ for $\Sigma$ and $\Sigma'$, respectively.
Choose paths $\beta$ along the concordance $C$ from the endpoints of $\tau_0$ to the corresponding endpoints of $\tau'$.
After pushing off into $E_C^\times$, the resulting wedge of circles $\gamma=\tau_0\beta(\tau')^{-1}$ has factors representing elements of $\pi^C$.
Using the composite $\pi^C\twoheadrightarrow\pi^C/\pi^C_n\xrightarrow{\cong}\pi/\pi_n$, map each factor of $\gamma$ to an element of $\pi/\pi_n$, and lift that element to some element of $\pi$.
The resulting collection of elements in $\pi$ can be represented by a wedge of circles $\gamma_0$ in $E_\Sigma$.
The basing $\tau=\gamma_0^{-1}\tau_0$ for $\Sigma$ satisfies Definition~\ref{def:basing2links} for the basing $\tau'$.
(Note that in $E_C^\times$, we have a homotopy of paths rel endpoints $\beta^{-1}\mu\beta\simeq\mu'$, where $\mu$ and $\mu'$ are the unbased meridians of $\Sigma$ and $\Sigma'$, respectively, which drags the meridians $\mu$ along $\beta$.)
Thus, the isomorphism $\pi'/\pi'_n\xrightarrow{\cong}\pi/\pi_n$ induced by the concordance is an $n$-basing for $\Sigma'$ relative to $\Sigma$.
\end{proof}


\section{Non-concordant links of 2-spheres in simply-connected 4-manifolds} \label{sec:main}

In this section, we build toward the proof of Theorem~\ref{thm:main}.
We start with a family of 2-component links $L_k\subset S^3$ from \cite{Milnor}; see Figure \ref{fig:milnorlink}.
Performing 0-surgery on the first component of each $L_k$, we obtain a family of knots $K_k\subset S^1\times S^2$ which we show are pairwise non-concordant in $\Z$-homology cobordisms in Section~\ref{subsec:knots}; see Proposition~\ref{prop:knots}.
We use this result to show that the spun 2-knots \[S_k':=\Spun(K_k)\subset\Spun(S^1\times S^2)\] are pairwise non-concordant in $\Z$-homology cobordisms in Section~\ref{subsec:spheres}; see Proposition~\ref{prop:spheres}.
Finally, in Section~\ref{subsec:links}, we surger $\Spun(S^1\times S^2)$ along an $S^1$ to obtain $S^2\times S^2$.
Consider the family of 2-component links of nullhomotopic 2-spheres \[\Sigma_k=S\sqcup S_k\subset S^2\times S^2,\] where $S$ is the meridional 2-sphere to the surgery curve and $S_k$ is the image of $S_k'$ under the surgery to $S^2\times S^2$.
By \cite{sunukjian}, corresponding components of the $\Sigma_k$ are concordant in $(S^2\times S^2)\times [0,1]$.
We leverage the non-concordance of the spheres $S_k'$ to show the links $\Sigma_k$ are pairwise non-concordant (in $\Z$-homology cobordisms).


\subsection{Non-concordant knots in $S^1\times S^2$} \label{subsec:knots}

We begin with a family of 2-component links $L_k=L_k^1\sqcup L_k^2\subset S^3$, $k\geq 0$, from \cite{Milnor}, where $L_0$ is the 2-component unlink.
The link $L_1$, which is the (positive-clasped) Whitehead link, and the general link $L_k$ are pictured in Figure \ref{fig:milnorlink}.
These links are pairwise not concordant in $\Z$-homology cobordisms, distinguished by Milnor's $\ol{\mu}$-invariants \cites{Milnor,Cochran90}.
Milnor's $\ol{\mu}$-invariants of $L_k$ of length $2k+1$ vanish, but the length $2k+2$ invariant $\ol{\mu}(1\cdots122)$ of $L_k$ is $(-1)^k$.
In the language of Section~\ref{subsec:backgroundMilnor}, $L_k$ admits a $(2k+1)$-basing relative to the 2-component unlink in $S^3$ but does not admit a $(2k+2)$-basing.
By properties of $k$-basings \cite{Stees26}, for $m>n$ the link $L_m$ admits a $(2n+1)$-basing but not a $(2n+2)$-basing relative to $L_n$.

\begin{figure}
    \labellist
    \pinlabel{$k$} at 191 85
      \pinlabel{$L_1^1$} at 5 60
      \pinlabel{$L_1^2$} at 15 5
      \pinlabel{$L_k^1$} at 90 60
      \pinlabel{$L_k^2$} at 100 5
    \pinlabel{\scalebox{2}{$\cdots$}} at 205 35
    \endlabellist
    \vspace{.1in}\includegraphics[width=120mm]{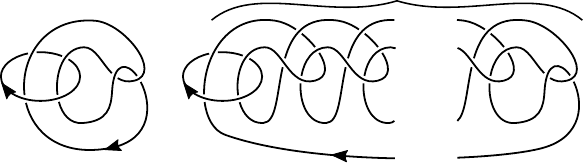}\caption{Left: $L_1$ is the positive-clasped Whitehead link. Right: The general link $L_k$.}\label{fig:milnorlink}
\end{figure}
We now pass from the family $L_k$ to a family of pairwise non-concordant knots $K_k\subset S^1\times S^2$.
Let $K_k\subset S^1\times S^2$ be obtained from $L_k\subset S^3$ as the image of $L_k^2$ under 0-surgery on $S^3$ along $L_k^1$.
The knot $K_0$ is the local unknot.

\begin{proposition} \label{prop:knots}
For $m>n$, the knot $K_m$ admits a $(2n+1)$-basing but does not admit a $(2n+2)$-basing relative to $K_n$.
Consequently, the knots $K_k$ are pairwise not $\Z$-homology concordant.
\end{proposition}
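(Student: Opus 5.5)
The plan is to compare the knot exteriors $E_{K_k}\subset S^1\times S^2$ with the link exteriors $E_{L_k}\subset S^3$, and then transfer the known basing data for the $L_k$ to the $K_k$.

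\textbf{Step 1: relate the groups.} The exterior $E_{K_k}$ is obtained from $E_{L_k}$ by Dehn filling the first boundary torus along the $0$-framed longitude $\lambda_1$ of $L_k^1$. By van Kampen,
\[
\pi_1(E_{K_k})\cong \pi_1(E_{L_k})/\langle\langle \lambda_1\rangle\rangle .
\]
In both $L_m$ and $L_n$ the longitude $\lambda_1$ lies deep in the lower central series. Indeed, $\lambda_1\in\pi_{2k+1}$ for $L_k$, since its class in the nilpotent quotient is governed by the Milnor invariants $\ol\mu(\cdots 1)$, and the first nonvanishing of these has length $2k+2$. So killing $\lambda_1$ does not change $\pi/\pi_{q}$ for $q\le 2k+1$.

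\textbf{Step 2: construct the $(2n+1)$-basing.} Start from the $(2n+1)$-basing $(\phi,\phi_\partial)$ for $L_m$ relative to $L_n$ given by \cite{Stees26}. It sends $\lambda_1^{(m)}$ to the class of $\lambda_1^{(n)}$ modulo $\pi_{2n+1}$, because $\phi_\partial$ preserves the longitude framing via compatibility. Hence $\phi$ descends to an isomorphism of the quotients of $\pi/\pi_{2n+1}$ by the normal closures of the $\lambda_1$'s, and these quotients are exactly the lower central quotients of the $K$-groups. The boundary torus of $K$ is the second torus of $L$, so $\phi_\partial$ restricts to it and compatibility is inherited. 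This gives the $(2n+1)$-basing.

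\textbf{Step 3: obstruct a $(2n+2)$-basing.} I expect this to be the main obstacle. Filling can create new relations, so a $(2n+2)$-basing for $K_m$ relative to $K_n$ need not lift to a basing for the links. I would handle it with an explicit computation. Milnor's presentation of the nilpotent quotient gives
\[
\pi(L_k)/\pi_q=\langle x,y\mid [x,\lambda_1],\,[y,\lambda_2],F_q\rangle ,
\]
and for $K_k$ we add the relation $\lambda_1=1$. For $L_k$ one has $\lambda_2\equiv \pm[x,[x,\dots,[x,y]]]$ (with $2k+1$ copies of $x$) modulo higher terms, reflecting $\ol\mu(1\cdots122)=(-1)^k$. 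The longitude $\lambda_2$ is preserved up to conjugacy in the quotient by compatibility, so it serves as the invariant.

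For $K_n$ with $n<m$ the relation $[y,\lambda_2]=1$ is nontrivial in degree $2n+2$. Since $\lambda_1\in\pi_{2k+1}$, the relation $\lambda_1=1$ only affects degree $\ge 2n+1$, and I would check it does not cancel this. For $K_m$, by contrast, the corresponding relation is trivial in degree $2n+2$. Then I compare the ranks of the free-abelian graded pieces $\pi_{2n+1}/\pi_{2n+2}$ for $K_m$ and $K_n$. An isomorphism $\pi/\pi_{2n+2}$ preserving the meridian forces these ranks to be equal, so unequal ranks rule out the basing. If the ranks happen to agree, I would instead compare the images of the peripheral longitude $\lambda_2$ in $\pi_{2n+1}/\pi_{2n+2}$ up to the automorphisms allowed by meridian preservation. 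Alternatively, I would invoke \cite{Kuzbary}'s invariants directly.

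\textbf{Step 4: conclude.} Pairwise non-concordance follows from \cite{Stees26}*{Theorem A$'$(1)}. A $\Z$-homology concordance would give basings for every $n$, contradicting Step 3.
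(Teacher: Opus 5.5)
\textbf{Verdict: genuine gap in Step 3.} Your Steps 1, 2 and 4 are sound. In Step 2 you descend a $(2n+1)$-basing for $L_m$ relative to $L_n$ directly, rather than going through $K_0$ as the paper does; that is fine, since both first longitudes lie in $\pi_{2n+1}$.

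\textbf{Why the proposed mechanism fails.} Step 3 relies on the wrong relation. The leading term of $\lambda_2$ has weight $2n+1$: it has $2n$ copies of $x$ and one $y$, not $2n+1$ copies of $x$. So $[y,\lambda_2]\in F_{2n+2}$, and this relation is already trivial in $\pi/\pi_{2n+2}$. Indeed $\pi(L_m)/\pi_{2n+2}\cong\pi(L_n)/\pi_{2n+2}\cong F/F_{2n+2}$. The relation can only affect $\pi_{2n+2}/\pi_{2n+3}$. It therefore cannot make the ranks of $\pi_{2n+1}/\pi_{2n+2}$ differ, and it gives no obstruction at level $2n+2$.

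\textbf{The missing fact.} What actually separates the two groups at this level is the relation $\lambda_1=1$, which you set aside as something to check is harmless. By cyclic symmetry of the first nonvanishing $\ol\mu$-invariants, $\ol\mu(1\cdots1\,2\,2\,1)=\ol\mu(1\cdots1\,2\,2)=(-1)^n$, with $2n-1$ ones on the left. Hence $\lambda_1^{(n)}\in\pi(L_n)_{2n+1}\setminus\pi(L_n)_{2n+2}$. Under $\pi(L_n)/\pi_{2n+2}\cong F/F_{2n+2}$, its image is a nonzero central element. So $\pi(K_n)_{2n+1}/\pi(K_n)_{2n+2}$ has rank one less than $F_{2n+1}/F_{2n+2}$. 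On the other hand, $\lambda_1^{(m)}\in\pi_{2m+1}\subseteq\pi_{2n+3}$, so $\pi(K_m)/\pi(K_m)_{2n+2}\cong F/F_{2n+2}$.

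With this input your rank comparison does go through. It even shows the abstract groups $\pi/\pi_{2n+2}$ are non-isomorphic, with no peripheral data needed. Without it, Step 3 is a list of hedged alternatives, none carried out. Your longitude fallback would also need the image of $\lambda_1$ in $F_{2n+1}/F_{2n+2}$, to check that killing it does not kill the leading term of $\lambda_2$.

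\textbf{How the paper proceeds.} The paper avoids graded computations. It compares $K_n$ only with $K_0$, whose group is free. It uses that $F/F_{2n+2}$ is Hopfian to upgrade a $(2n+2)$-basing for $K_n$ into one for $L_n$ relative to the unlink, contradicting $\ol\mu(1\cdots122)\neq0$. It then composes basings to handle $m>n$.

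\textbf{A gap in Step 4.} The first claim of the statement only holds for $n\geq1$. For $n=0$, $K_m$ has a $(2m+1)$-basing, hence a $2$-basing, relative to $K_0$. So Step 4 cannot get non-concordance of the pairs $(K_m,K_0)$ from Step 3. You need the obstruction at level $2m+2$, which is the paper's first step and which the corrected rank count also supplies.
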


\begin{proof}
Fix $n\geq 1$.
We first show $K_n$ admits a $(2n+1)$-basing, but not a $(2n+2)$-basing, relative to $K_0$, the local unknot.
Let $\pi=\pi_1(E_{K_n})$, and denote the 2-generator free group $\pi_1(E_{K_0})$ by $F$.

Let $(\phi_{2n+1},\phi_\partial)$ be any $(2n+1)$-basing for $L_n$ relative to $L_0$, which exists because Milnor's $\ol{\mu}$-invariants of $L_n$ of lengths $2n+1$ vanish \cite{Stees26}*{Lemma 8.1}.
The knot groups $F$ and $\pi$ are related to the corresponding link groups by $F=\pi_1(E_{L_0})$ and $\pi=\pi_1(E_{L_n})/\langle\langle \ell_1\rangle\rangle$, where $\ell_1$ denotes the (based) longitude of $L_n^1$.
Note that the boundary isomorphism $\phi_\partial$ is determined by the requirement that the square in Definition~\ref{def:basinglinks} commutes, i.e.~it must preserve 0-framed longitudes.
Thus, as $\phi_{2n+1}$ is a $(2n+1)$-basing, $\phi_{2n+1}(\ell_1)=1$, so $\phi_{2n+1}$ descends to an isomorphism \[\ol{\phi}_{2n+1}:\pi/\pi_{2n+1}\xrightarrow{\cong}F/F_{2n+1}.\]

Let $\ol\phi_\partial$ denote the induced boundary isomorphism, that is, the restriction of $\phi_\partial$ to the second component.
Any basing $\tau_n$ for $K_n$ lifts to a basing $\wt\tau_n$ for $L_n$ because the inclusion-induced homomorphism  $\pi_1(E_{L_n})\twoheadrightarrow\pi$ is surjective.
Let $\wt\tau_0$ be a basing for $L_0$ corresponding to $\wt\tau_n$ as in Definition~\ref{def:basinglinks}, guaranteed by the existence of the $(2n+1)$-basing for $L_n$ relative to $L_0$, and consider the induced basing $\tau_0$ for $K_0$.
By construction, $\tau_n$, $\tau_0$, $
\ol\phi_{2n+1}$, and $\ol\phi_\partial$ make the square in Definition~\ref{def:basinglinks} commute, so $(\ol\phi_{2n+1},\ol\phi_\partial)$ is a $(2n+1)$-basing for $K_n$ relative to $K_0$.

Now suppose, for the sake of contradiction, that there exists a $(2n+2)$-basing $(\ol{\phi}_{2n+2},\ol\phi_\partial)$ for $K_n$ relative to $K_0$. 
Choose any basing $\tau_n$ for $K_n$ and let $\tau_0$ be a basing for $K_0$ compatible with $(\ol{\phi}_{2n+2},\ol\phi_\partial)$ as in Definition~\ref{def:basinglinks}.
Lift $\tau_n$ to a basing $\wt\tau_n$ for $L_n$.
As $\pi_1(E_{L_0})=F$, choose the basing $\wt\tau_0$ for $L_0$ using the basing $\tau_0$, along with a basing $\wt\tau_0^1$ for $L_0^1$ such that the image of the corresponding based meridian in $F/F_{2n+2}$ agrees with the image of the meridian corresponding to $\wt\tau_n^1$ under \[\pi_1(E_{L_n})\twoheadrightarrow\pi\twoheadrightarrow\pi/\pi_{2n+2}\xrightarrow{\cong}F/F_{2n+2}.\]
Let $\lambda\in\pi$ denote the based class of the 0-framed longitude of $K_n$ (which is well-defined since $K_n$ is nullhomologous), and let $\ell_1$ and $\ell_2$ be the 0-framed longitudes for $L_n^1$ and $L_n^2$, respectively, determined by $\wt\tau_n$.

Recall the Chen-Milnor presentation \cite{Milnor} \[\pi_1(E_{L_n})/\pi(E_{L_n})_{2n+2}=\langle\, m_1, m_2\,|\,F_{2n+2}, [m_1,\ell_1],[m_2,\ell_2]\,\rangle,\] where $m_1$ and $m_2$ are the meridians determined by $\wt\tau_n$.
This yields the presentation \[\pi/\pi_{2n+2}=\langle\, m_1, m_2\,|\,F_{2n+2}, \ell_1,[m_2,\lambda]\,\rangle.\]
We thus obtain a surjection from $F/F_{2n+2}$ to itself via the composition \[F/F_{2n+2}\twoheadrightarrow\pi_1(E_{L_n})/\pi(E_{L_n})_{2n+2}\twoheadrightarrow\pi/\pi_{2n+2}\xrightarrow[\cong]{\ol\phi_{2n+2}}F/F_{2n+2}.\]
As $F/F_{2n+2}$ is Hopfian, this surjection is an isomorphism, and so \[\pi_1(E_{L_n})/\pi(E_{L_n})_{2n+2}\to F/F_{2n+2}=\pi_1(E_{L_0})/\pi_1(E_{L_0})_{2n+2}\] is also an isomorphism.
This isomorphism sends $\ell_1\mapsto 1$, as $\ell_1$ becomes trivial in the quotient $\pi/\pi_{2n+2}$, and also sends $\ell_2\mapsto\lambda\mapsto 1$.
Thus, it constitutes a $(2n+2)$-basing for $L_n$ relative to $L_0$.
This contradicts the nontriviality of the length $(2n+2)$ invariant $\ol\mu(1\cdots 122)$.

Finally, given $m>n$, $K_m$ and $K_n$ admit $(2m+1)$- and $(2n+1)$-basings, respectively, relative to $K_0$.
As $m>n$, the $(2m+1)$-basing for $K_m$ relative to $K_0$ induces a $(2n+2)$-basing and a $(2n+1)$-basing which, composed with the inverse of the $(2n+1)$-basing for $K_n$ relative to $K_0$, yields the desired $(2n+1)$-basing for $K_m$ relative to $K_n$ (see \cite{Stees26}*{Section 4.1}).
If a $(2n+2)$-basing for $K_m$ relative to $K_n$ existed, we could post-compose its inverse with the $(2n+2)$-basing for $K_m$ relative to $K_0$ to obtain a $(2n+2)$-basing for $K_n$ relative to $K_0$, a contradiction.
Thus, the knots $K_k$ are pairwise not $\Z$-homology concordant.
\end{proof}

In the language of \cite{Stees26}*{Theorem D}, the argument above shows that for $m>n$ the Milnor invariant $\ol\mu_{2n+1}(K_m)$ over the trivial group relative to $K_n$ is nonvanishing.
One could also use Kuzbary's \emph{Dwyer number} \cite{Kuzbary} to conclude the $K_k$ are pairwise non-concordant.
We used the argument above as it more closely relates to the proof of Proposition~\ref{prop:spheres} to come.


\subsection{Non-concordant 2-spheres through spinning}\label{subsec:spheres}

We first recall the notion of ``spinning," which yields a 2-sphere in a smooth 4-manifold given the input of a knot in a closed 3-manifold.

\begin{definition}
    Let $K$ be a knot in a 3-manifold $Y$. Let $B$ be a ball centered about a point in $K$, intersecting $K$ in a trivial 1-stranded tangle.
    
    The {\emph{spin}} $(\Spun(Y),\Spun(K))$ of $(Y,K)$ is the pair \[((Y\setminus \mathring{B}, K\setminus\mathring{B})\times S^1) \cup (S^2\times D^2,S^0\times D^2),\] where the gluing map is the canonical identification of each boundary with $S^2\times S^1$.
\end{definition}

\begin{remark}
    A simple application of the Seifert--van Kampen theorem yields $\pi_1(Y)=\pi_1(\Spun(Y))$ and $\pi_1(Y\setminus K)=\pi_1(\Spun(Y)\setminus \Spun(K))$, with isomorphisms induced by the canonical inclusion (after puncturing). 
\end{remark}

Let $S_k'$ be the family of smooth 2-spheres $S_k'=\Spun(K_k)$ in the smooth 4-manifold $\Spun(S^1\times S^2)$.

\begin{proposition} \label{prop:spheres}
For $m>n$, the 2-knot $S_m'$ admits a $(2n+1)$-basing relative to $S_n'$, but does not admit a $(2n+3)$-basing.
Consequently, the 2-knots $S_k'$ are pairwise not $\Z$-homology concordant.
\end{proposition}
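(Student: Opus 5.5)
By the Remark on spinning, $\pi_1(E_{S_k'})\cong\pi_1(E_{K_k})$, with the isomorphism induced by inclusion, and meridians of $K_k$ map to meridians of $S_k'$. The plan is to transfer the basing statements of Proposition~\ref{prop:knots} through this identification. The key difference between the two settings is peripheral. A basing for a knot (Definition~\ref{def:basinglinks}) must also respect the longitude, whereas a basing for a 2-knot (Definition~\ref{def:basing2links}) only has to preserve the meridian. Losing the longitude condition is what costs one level, which explains why the statement obstructs a $(2n+3)$-basing rather than a $(2n+2)$-basing.

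\emph{Existence.} Proposition~\ref{prop:knots} gives a $(2n+1)$-basing $(\phi,\phi_\partial)$ for $K_m$ relative to $K_n$. Via the spinning isomorphism, $\phi$ becomes an isomorphism $\pi_1(E_{S_m'})/(\cdot)_{2n+1}\to\pi_1(E_{S_n'})/(\cdot)_{2n+1}$. A basing $\tau'$ of $S_m'$ can be homotoped into the punctured copy of $E_{K_m}$, where it becomes a basing of $K_m$. The compatibility square in Definition~\ref{def:basinglinks} then gives a basing $\tau$ of $K_n$ whose meridian corresponds to the image of the meridian of $\tau'$. Hence $\phi$ preserves based meridians and is a $(2n+1)$-basing in the sense of Definition~\ref{def:basing2links}.

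\emph{Obstruction.} It suffices to show that $S_n'$ admits no $(2n+3)$-basing relative to $S_0'$. Granting that, if $S_m'$ had a $(2n+3)$-basing relative to $S_n'$, we would compose as at the end of the proof of Proposition~\ref{prop:knots}: $m>n$ gives $2m+1\ge 2n+3$, and the $(2m+1)$-basing of $S_m'$ relative to $S_0'$ (obtained from the existence step) would combine with it to produce a forbidden $(2n+3)$-basing of $S_n'$ relative to $S_0'$.

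So suppose $\psi:\pi/\pi_{2n+3}\to F/F_{2n+3}$ preserves meridians, where $\pi=\pi_1(E_{K_n})$ and $F$ is free on $x=m_1$ and $y=m_2$. The main task is to recover the longitude condition one level down. Write $\lambda$ for the longitude of $K_n$. Since $[m_2,\lambda]=1$ in $\pi$, we get $[y,\psi(\lambda)]=1$ in $F/F_{2n+3}$. Because $K_n$ is nullhomologous, $\lambda\in\pi_2$, so $\psi(\lambda)\in F_2$. I will then use a centralizer computation in free nilpotent groups (via the Magnus expansion, or via the free Lie algebra graded pieces). If $w\in F_j\setminus F_{j+1}$ with $j\ge2$, then $[y,w]\notin F_{j+2}$, because bracketing with a generator is injective on the degree-$j$ Lie elements with $j\ge2$. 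Taking $j$ to be the filtration degree of $\psi(\lambda)$, this forces $\psi(\lambda)\in F_{2n+2}$ whenever $j+1\le 2n+2$. Hence $\psi$ descends to a $(2n+2)$-isomorphism that sends the based meridian to $y$ and the longitude to $1$ modulo $F_{2n+2}$. Together with the boundary map determined by meridian and longitude, this is a $(2n+2)$-basing of $K_n$ relative to $K_0$ in the sense of Definition~\ref{def:basinglinks}, contradicting Proposition~\ref{prop:knots}.

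I expect the main obstacle to be this centralizer step, for two reasons. First, the lemma is needed in the free nilpotent quotient rather than in the free group. Second, meridian preservation only holds up to the choice of basing, so $\psi(m_2)$ is a conjugate of $y$ and the argument has to be conjugated accordingly.

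Finally, the $\Z$-homology concordance conclusion follows from the earlier Proposition that a concordance in a $\Z$-homology cobordism induces $n$-basings for all $n$.
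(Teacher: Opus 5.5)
Your proposal is correct and follows the paper's proof essentially step for step. Existence is transferred from Proposition~\ref{prop:knots} through the spinning isomorphism. The obstruction uses $[\mu,\lambda]=1$ together with a Magnus-expansion (equivalently, free Lie algebra centralizer) argument to force $\lambda\in\pi_{2n+2}$, which gives a forbidden $(2n+2)$-basing of $K_n$ relative to $K_0$, and you finish with the same composition step. Your explicit use of $\lambda\in\pi_2$ and of conjugation invariance fills in details the paper leaves implicit.

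One caveat, shared with the paper (which fixes $n\ge 1$): your reduction to $S_0'$ works only for $n\ge 1$. For $n=0$ the non-existence claim is actually false, since $S_m'$ has a $(2m+1)$-basing, hence a $3$-basing, relative to $S_0'$. The non-concordance conclusion still holds, because $S_m'$ has no $(2m+3)$-basing relative to $S_0'$.
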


Before presenting the proof, we remark on subtle differences between the settings of Propositions ~\ref{prop:knots} and~\ref{prop:spheres}.
\begin{itemize}
\item A $k$-basing of 2-knots requires less structure than a $k$-basing of knots, as the peripheral structures no longer contain longitudes.
Thus, obstructing a $k$-basing of knots (as  in Proposition~\ref{prop:knots}) does not immediately obstruct a $k$-basing of the corresponding spun 2-knots (as in Proposition~\ref{prop:spheres}).
This explains why we obstruct a $(2n+2)$-basing in Propsosition~\ref{prop:knots}, but only obstruct a $(2n+3)$-basing in Proposition~\ref{prop:spheres}. 
\item While 2-knots do not have longitudes geometrically, spun 2-knot groups have elements---namely, the images of the longitudes of the knots being spun---which algebraically behave like longitudes.
Our proof below leverages the role these elements play in the lower central quotients of the 2-knot groups.
\end{itemize}

\begin{proof}[Proof of Proposition~\ref{prop:spheres}]
We use hands-on group-theoretic arguments.
Fix $n\geq 1$.
As in the proof of Proposition~\ref{prop:knots}, we first show $S_n'$ admits a $(2n+1)$-basing, but not a $(2n+3)$-basing, relative to $S_0'$.
As before, let $\pi=\pi_1(E_{S_n'})=\pi_1(E_{K_n})$, and let $F$ be the 2-generator free group $F=\pi_1(E_{S_0'})=\pi_1(E_{K_0})$.
The existence of a $(2n+1)$-basing for $S_n'$ relative to $S_0'$ is immediate from Proposition~\ref{prop:knots} and the isomorphisms between knot and spun 2-knot groups.
The $(2n+1)$-basing for $K_n$ relative to $K_0$ induces a $(2n+1)$-basing for $S_n'$ relative to $S_0'$.

Now suppose, for the sake of contradiction, that there exists a $(2n+3)$-basing \[\phi:\pi/\pi_{2n+3}\xrightarrow{\cong}F/F_{2n+3}\] for $S_n'$ relative to $S_0'$.
We show this implies the existence of a $(2n+2)$-basing for $K_n$ relative to $K_0$, contradicting Proposition~\ref{prop:knots}.
Choose basings $\tau_n$ and $\tau_0$ for $S_n'$ and $S_0'$ compatible with $\phi$ in the sense of Definition~\ref{def:basing2links}. Let $\mu$ and $\mu_0$ denote the based meridians of $S_n'$ and $S_0'$, respectively.
Because of the canonical isomorphism $\pi_1(E_{S_n'})=\pi_1(E_{K_n})$, $\mu$ is represented by a based meridian for $K_n\subset S^1\times S^2$.
Let $\lambda\in\pi$ be the corresponding based 0-framed longitude, viewed as an element of $\pi=\pi_1(E_{S_n'})$.

The relation $[\mu,\lambda]=1$ holds in $\pi$, so it also holds in $\pi/\pi_{2n+3}$.
Therefore, \[[\phi(\mu),\phi(\lambda)]=[\mu_0,\phi(\lambda)]=1\in F/F_{2n+3}.\]
An application of the Magnus expansion \cite{MKS} implies $\phi(\lambda)\in F_{2n+2}$, so $\lambda\in\pi_{2n+2}$ since $\phi$ is an isomorphism.
Thus, viewed as an isomorphism on lower central quotients of the knot groups $K_n$ and $K_0$, $\phi$ induces a $(2n+2)$-basing for $K_n$ relative to $K_0$ \[\ol\phi:\pi/\pi_{2n+2}\xrightarrow{\cong}F/F_{2n+2}\] (where the basings $\tau_n$ and $\tau_0$ correspond), which contradicts Proposition~\ref{prop:knots}.

The remainder of the argument is entirely similar to the proof of Proposition~\ref{prop:knots}.
Given $m>n$, $S_m'$ and $S_n'$ admit $(2n+1)$-basings relative to $S_0'$, hence $S_m'$ admits a $(2n+1)$-basing relative to $S_n'$.
If $S_m'$ admitted a $(2n+3)$-basing relative to $S_n'$, one could obtain a $(2n+3)$-basing for $S_n'$ relative to $S_0'$, contradicting the above argument.
Thus, the $S_k'$ are pairwise not $\Z$-homology concordant.
\end{proof}

The spin of $(Y,K)$ can also be defined as \[\Spun(Y,K)=(\partial ((Y\setminus \mathring{B})\times D^2), \partial((K\setminus\mathring{B})\times D^2)),\] where $B$ is a small ball in $Y$ intersecting $K$ in a trivial 1-stranded tangle. This definition makes the extension to \emph{super-spinning} \cite{Cappell} more obvious: the $m$-super-spin of $(Y,K)$ is \[\Spun_m(Y,K)=(\Spun_m(Y),\Spun_m(K))=(\partial ((Y\setminus \mathring{B})\times D^{m+1}), \partial((K\setminus\mathring{B})\times D^{m+1})).\]
We again have canonical isomorphisms \[\pi_1(Y)\to\pi_1(\Spun_m(Y)) \hspace{1em}\text{ and }\hspace{1em} \pi_1(Y\setminus K)\to\pi_1(\Spun_m(Y)\setminus\Spun_m(K)),\] so 
Proposition~\ref{prop:spheres} also holds for $\Spun_m(S^1\times S^2, K_k)$. 
An analogue of Theorem~\ref{thm:main} follows in a similar manner.
Thus, for each $n\geq 2$, this construction yields infinite families of non-concordant 2-component links of $n$-spheres in specific simply-connected $(n+2)$-manifolds.


\subsection{Proof of Theorem~\ref{thm:main}} \label{subsec:links}

\main*

\begin{proof}
Consider the smooth 4-manifold \[X:=\left(\Spun(S^1\times S^2)-(S^1\times B^3)\right)\cup(D^2\times S^2), \] obtained from $\Spun(S^1\times S^2)$ by performing surgery on \[S^1\times\{\ast\}\times\{\ast\}\subset ((S^1\times S^2)\setminus\mathring{B}^3)\times S^1\subset\Spun(S^1\times S^2).\]
The coordinates on $(S^1\times S^2)\times S^1$ induce a parametrization $\nu(S^1)=S^1\times B^3$ and hence $\partial\nu(S^1)=S^1\times S^2$. We use the natural identification of $\partial D^2\times S^2=S^1\times S^2$ to perform the surgery.
Denote the meridional $S^2$ to this surgery curve by $S$.
A Kirby calculus argument, seen in Figure \ref{fig:kirby}, shows that $X\cong S^2\times S^2$.
The two $S^1$ surgeries which take $(S^1\times S^2)\times S^1$ to $X$ correspond to replacing the 1-handles represented by the two dotted circles in Figure \ref{fig:kirby} (left) with 0-framed 2-handles.
We obtain the standard diagram for $S^2\times S^2$ by handle slides.

\begin{figure}
\labellist
    \pinlabel{$(S^1\times S^2)\times S^1$} at 42 105
    
    \pinlabel{\textcolor{red}{$0$}} at 52 90
\pinlabel{\textcolor{red}{$0$}} at 75 90

\pinlabel{$\Spun(S^1\times S^2)$} at 135 105

\pinlabel{\textcolor{red}{$0$}} at 147 90
\pinlabel{\textcolor{red}{$0$}} at 170 90
\pinlabel{\textcolor{red}{$0$}} at 135 5

\pinlabel{$X$} at 232 105
\pinlabel{$X$} at 327 105
\pinlabel{$X$} at 427 105

\pinlabel{\textcolor{red}{$0$}} at 243 90
\pinlabel{\textcolor{red}{$0$}} at 266 90
\pinlabel{\textcolor{red}{$0$}} at 231 5
\pinlabel{\textcolor{red}{$0$}} at 215 90

\pinlabel{\textcolor{red}{$0$}} at 338 95
\pinlabel{\textcolor{red}{$0$}} at 327 5
\pinlabel{\textcolor{red}{$0$}} at 310 90

\pinlabel{\textcolor{red}{$0$}} at 435 90
\pinlabel{\textcolor{red}{$0$}} at 455 90
\pinlabel{\textcolor{red}{$0$}} at 445 8
\pinlabel{\textcolor{red}{$0$}} at 400 70
    \endlabellist

    \vspace{.2in}
    \includegraphics[width=150mm]{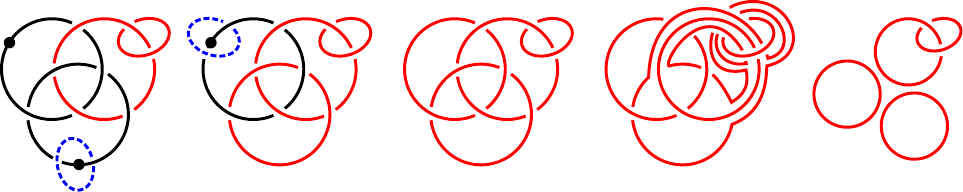}
    \caption{First: The 4-manifold $(S^1\times S^2)\times S^1$. We highlight the curve $\{\ast\}\times\{\ast\}\times S^1$. 
    Second: The 4-manifold $\Spun(S^1\times S^2)$, obtained from $(S^1\times S^2)\times S^1$ by surgery along $\{\ast\}\times\{\ast\}\times S^1$. We highlight the curve $S^1\times\{\ast\}\times\{\ast\}$.
    Third: The 4-manifold $X$ obtained by surgering $\Spun(S^1\times S^2)$ along $S^1\times\{\ast\}\times\{\ast\}$.
    Fourth: We perform two handle slides to the diagram for $X$.
    Fifth: Isotopy of the diagram shows $X\cong S^2\times S^2$.}\label{fig:kirby}
\end{figure}

Let $\{\Sigma_k\}$ be the family of smoothly embedded links $\Sigma_k=S\sqcup S_k\subset S^2\times S^2$, where $S_k$ is the image of $S_k'\subset \Spun(S^1\times S^2)$ under the surgery to $S^2\times S^2$.
Suppose that $\Sigma_m$ and $\Sigma_n$ are topologically $\Z$-homology concordant for $m\neq n$.
Performing surgery on the concordance from $S\subset \Sigma_m$ to $S\subset \Sigma_n$, replacing an $(S^2\times D^2)\times[0,1]$ with $(B^3\times S^1)\times [0,1]$, yields a concordance from $S_m'$ to $S_n'$ in a $\Z$-homology cobordism from $\Spun(S^1\times S^2)$ to itself, contradicting Proposition~\ref{prop:spheres}.
Thus, the links $\Sigma_k$ are pairwise not $\Z$-homology concordant.
\end{proof}

The construction of the family of links $\Sigma_k\subset S^2\times S^2$ provides a general strategy for producing non-concordant links of 2-spheres in various simply-connected 4-manifolds; however, it cannot possibly provide a positive answer to Question \ref{q:slice} regarding 2-links in $S^4$.
In the construction of the $\Sigma_k$, the 4-manifold $S^2\times S^2$ is the result of surgering out the fundamental group of $\Spun(S^1\times S^2)$.
This procedure works for more general 3-manifolds $Y$ which are not rational homology spheres.
For $\Q$-homology spheres $Y$, $H_2(Y)=0$ forces $H_2(\pi)=0$, where $\pi$ is the knot group of any knot in $Y$.
For any pair of homotopic knots in $Y$, we thus obtain meridian-preserving isomorphisms between the quotients of the knot groups by their lower central subgroups by the Stallings exact sequence \cite{Stallings}.
Consequently, the specific type of Milnor invariant we apply to prove Theorem~\ref{thm:main} is unable to obstruct the concordance of the spins of homotopic knots in $Y$.
Since $Y$ must have nontrivial $H_2$, so must its spin, hence any 4-manifold to which our proof applies fundamentally must have nontrivial $H_2$.

Spinning knots in 3-manifolds $Y$ can provide further examples of non-concordant 2-knots in non-simply-connected 4-manifolds using other versions of Milnor invariants.
Indeed, one may apply arguments similar to those in Section~\ref{subsec:spheres} to show that the family of non-concordant knots $K^n_\alpha$ from \cite{Stees25a}*{Theorem 5.1}, which are constructed from any fixed $\pi_1$-essential knot in any aspherical 3-manifold, yield non-concordant 2-spheres after spinning.


\bibliography{references.bib}

@book {Brown,
    AUTHOR = {Brown, Kenneth S.},
     TITLE = {Cohomology of groups},
    SERIES = {Graduate Texts in Mathematics},
    VOLUME = {87},
 PUBLISHER = {Springer-Verlag, New York-Berlin},
      YEAR = {1982},
     PAGES = {x+306},
      ISBN = {0-387-90688-6},
   MRCLASS = {20-02 (18-01 20F32 20J05 55-01)},
  MRNUMBER = {672956},
MRREVIEWER = {Ross\ Staffeldt},
}

@article {CappellShaneson74,
    AUTHOR = {Cappell, Sylvain E. and Shaneson, Julius L.},
     TITLE = {The codimension two placement problem and homology equivalent
              manifolds},
   JOURNAL = {Ann. of Math. (2)},
  FJOURNAL = {Annals of Mathematics. Second Series},
    VOLUME = {99},
      YEAR = {1974},
     PAGES = {277--348},
      ISSN = {0003-486X},
   MRCLASS = {57D40},
  MRNUMBER = {339216},
MRREVIEWER = {J.\ P.\ Levine},
       DOI = {10.2307/1970901},
       URL = {https://doi.org/10.2307/1970901},
}

@article {Cochran90,
    AUTHOR = {Cochran, Tim D.},
     TITLE = {Derivatives of links: {M}ilnor's concordance invariants and
              {M}assey's products},
   JOURNAL = {Mem. Amer. Math. Soc.},
  FJOURNAL = {Memoirs of the American Mathematical Society},
    VOLUME = {84},
      YEAR = {1990},
    NUMBER = {427},
     PAGES = {x+73},
      ISSN = {0065-9266,1947-6221},
   MRCLASS = {57M25 (55S30)},
  MRNUMBER = {1042041},
MRREVIEWER = {C.\ Kearton},
       DOI = {10.1090/memo/0427},
       URL = {https://doi.org/10.1090/memo/0427},
}

@book {K3,
     TITLE = {K3---a new problem list in low-dimensional topology},
    SERIES = {Mathematical Surveys and Monographs},
    VOLUME = {295},
    EDITOR = {Baykur, R. \.Inan{\c c} and Kirby, Robion C. and Ruberman,
              Daniel},
 PUBLISHER = {American Mathematical Society, Providence, RI},
      YEAR = {2026},
     PAGES = {xi+430},
      ISBN = {978-1-4704-8433-0},
   MRCLASS = {57-02 (57Kxx 57Mxx)},
  MRNUMBER = {5077826},
       DOI = {10.1090/surv/295},
       URL = {https://doi.org/10.1090/surv/295},
}

@book {cappell,
    AUTHOR = {Cappell, Sylvain Edward},
     TITLE = {S{UPER}-{SPINNING} {AND} {KNOT} {COMPLEMENTS}},
      NOTE = {Thesis (Ph.D.)--Princeton University},
 PUBLISHER = {ProQuest LLC, Ann Arbor, MI},
      YEAR = {1969},
     PAGES = {49},
   MRCLASS = {Thesis},
  MRNUMBER = {2619165},
       URL =
              {http://gateway.proquest.com/openurl?url_ver=Z39.88-2004&rft_val_fmt=info:ofi/fmt:kev:mtx:dissertation&res_dat=xri:pqdiss&rft_dat=xri:pqdiss:7014192},
}

@misc{millerklugstong,
      title={Concordance of spheres in 4-manifolds with an immersed dual sphere}, 
      author={Michael Klug and Maggie Miller},
      year={2022},
      eprint={2211.07177},
      archivePrefix={arXiv},
      primaryClass={math.GT},
      url={https://arxiv.org/abs/2211.07177}, 
}

@article{AMY21,
  author={Audoux, Benjamin and Meilhan, Jean-Baptiste and Yasuhara, Akira},
  year={2021},
  eprint={https://arxiv.org/abs/2109.14578},
  status={preprint},
  title={Milnor-type invariants for surface-links and cut-diagrams},
}

@incollection {CassonGordon,
    AUTHOR = {Casson, A. J. and Gordon, C. McA.},
     TITLE = {On slice knots in dimension three},
 BOOKTITLE = {Algebraic and geometric topology ({P}roc. {S}ympos. {P}ure
              {M}ath., {S}tanford {U}niv., {S}tanford, {C}alif., 1976),
              {P}art 2},
    SERIES = {Proc. Sympos. Pure Math.},
    VOLUME = {XXXII},
     PAGES = {39--53},
 PUBLISHER = {Amer. Math. Soc., Providence, RI},
      YEAR = {1978},
      ISBN = {0-8218-1433-8},
   MRCLASS = {57M25},
  MRNUMBER = {520521},
}

@article {cochranslice,
    AUTHOR = {Cochran, Tim},
     TITLE = {Slice links in {$S\sp{4}$}},
   JOURNAL = {Trans. Amer. Math. Soc.},
  FJOURNAL = {Transactions of the American Mathematical Society},
    VOLUME = {285},
      YEAR = {1984},
    NUMBER = {1},
     PAGES = {389--401},
      ISSN = {0002-9947,1088-6850},
   MRCLASS = {57Q45},
  MRNUMBER = {748845},
MRREVIEWER = {Jonathan\ A.\ Hillman},
       DOI = {10.2307/1999487},
       URL = {https://doi.org/10.2307/1999487},
}

@article {COT,
    AUTHOR = {Cochran, Tim D. and Orr, Kent E. and Teichner, Peter},
     TITLE = {Knot concordance, {W}hitney towers and {$L^2$}-signatures},
   JOURNAL = {Ann. of Math. (2)},
  FJOURNAL = {Annals of Mathematics. Second Series},
    VOLUME = {157},
      YEAR = {2003},
    NUMBER = {2},
     PAGES = {433--519},
      ISSN = {0003-486X,1939-8980},
   MRCLASS = {57M25 (57M27)},
  MRNUMBER = {1973052},
MRREVIEWER = {Martin\ Scharlemann},
       DOI = {10.4007/annals.2003.157.433},
       URL = {https://doi.org/10.4007/annals.2003.157.433},
}

@article {DaherPowell,
    AUTHOR = {Daher, Michelle and Powell, Mark},
     TITLE = {Smoothing 3-manifolds in 5-manifolds},
   JOURNAL = {Invent. Math.},
  FJOURNAL = {Inventiones Mathematicae},
    VOLUME = {245},
      YEAR = {2026},
    NUMBER = {1},
     PAGES = {491--511},
      ISSN = {0020-9910,1432-1297},
   MRCLASS = {57K10 (57N35 57N70)},
  MRNUMBER = {5046848},
       DOI = {10.1007/s00222-026-01413-9},
       URL = {https://doi-org.ezproxy.lib.utexas.edu/10.1007/s00222-026-01413-9},
}

@article {Dwyer,
    AUTHOR = {Dwyer, William G.},
     TITLE = {Homology, {M}assey products and maps between groups},
   JOURNAL = {J. Pure Appl. Algebra},
  FJOURNAL = {Journal of Pure and Applied Algebra},
    VOLUME = {6},
      YEAR = {1975},
    NUMBER = {2},
     PAGES = {177--190},
      ISSN = {0022-4049,1873-1376},
   MRCLASS = {55E35 (20J05)},
  MRNUMBER = {385851},
MRREVIEWER = {Timothy\ Porter},
       DOI = {10.1016/0022-4049(75)90006-7},
       URL = {https://doi.org/10.1016/0022-4049(75)90006-7},
}

@article {EilenbergMacLane,
    AUTHOR = {Eilenberg, Samuel and MacLane, Saunders},
     TITLE = {Relations between homology and homotopy groups of spaces},
   JOURNAL = {Ann. of Math. (2)},
  FJOURNAL = {Annals of Mathematics. Second Series},
    VOLUME = {46},
      YEAR = {1945},
     PAGES = {480--509},
      ISSN = {0003-486X},
   MRCLASS = {56.0X},
  MRNUMBER = {13312},
MRREVIEWER = {P.\ A.\ Smith},
       DOI = {10.2307/1969165},
       URL = {https://doi-org.ezproxy.lib.utexas.edu/10.2307/1969165},
}

@book {freedmanquinn,
    AUTHOR = {Freedman, Michael H. and Quinn, Frank},
     TITLE = {Topology of 4-manifolds},
    SERIES = {Princeton Mathematical Series},
    VOLUME = {39},
 PUBLISHER = {Princeton University Press, Princeton, NJ},
      YEAR = {1990},
     PAGES = {viii+259},
      ISBN = {0-691-08577-3},
   MRCLASS = {57N13 (57-02)},
  MRNUMBER = {1201584},
MRREVIEWER = {Ian\ Hambleton},
}

@Article{Heck11,
  author={Heck, Prudence},
  year={2011},
  eprint={https://arxiv.org/abs/1110.6903},
  status={preprint},
  title={Homotopy properties of knots in prime manifolds},
}

@article {Kervaire,
    AUTHOR = {Kervaire, Michel A.},
     TITLE = {Les n\oe uds de dimensions sup\'erieures},
   JOURNAL = {Bull. Soc. Math. France},
  FJOURNAL = {Bulletin de la Soci\'et\'e{} Math\'ematique de France},
    VOLUME = {93},
      YEAR = {1965},
     PAGES = {225--271},
      ISSN = {0037-9484},
   MRCLASS = {57.20},
  MRNUMBER = {189052},
MRREVIEWER = {Edward\ M.\ Brown},
       URL = {http://www.numdam.org/item?id=BSMF_1965__93__225_0},
}

@article {klugmiller_1,
    AUTHOR = {Klug, Michael R. and Miller, Maggie},
     TITLE = {Concordance of surfaces in 4-manifolds and the
              {F}reedman-{Q}uinn invariant},
   JOURNAL = {J. Topol.},
  FJOURNAL = {Journal of Topology},
    VOLUME = {14},
      YEAR = {2021},
    NUMBER = {2},
     PAGES = {560--586},
      ISSN = {1753-8416,1753-8424},
   MRCLASS = {57K45 (57K40)},
  MRNUMBER = {4286049},
MRREVIEWER = {Uwe\ Kaiser},
       DOI = {10.1112/topo.12191},
       URL = {https://doi.org/10.1112/topo.12191},
}

@misc{klugmiller_2,
      title={Concordance of spheres in 4-manifolds with an immersed dual sphere}, 
      author={Michael Klug and Maggie Miller},
      year={2022},
      eprint={2211.07177},
      archivePrefix={arXiv},
      primaryClass={math.GT},
      url={https://arxiv.org/abs/2211.07177}, 
}

@article {Kuzbary,
    AUTHOR = {Kuzbary, Miriam},
     TITLE = {An analogue of {M}ilnor's invariants for knots in 3-manifolds},
   JOURNAL = {Algebr. Geom. Topol.},
  FJOURNAL = {Algebraic \& Geometric Topology},
    VOLUME = {24},
      YEAR = {2024},
    NUMBER = {6},
     PAGES = {3043--3067},
      ISSN = {1472-2747,1472-2739},
   MRCLASS = {57K10 (57K12)},
  MRNUMBER = {4812214},
       DOI = {10.2140/agt.2024.24.3043},
       URL = {https://doi.org/10.2140/agt.2024.24.3043},
}

@article {Levine,
    AUTHOR = {Levine, J.},
     TITLE = {Invariants of knot cobordism},
   JOURNAL = {Invent. Math.},
  FJOURNAL = {Inventiones Mathematicae},
    VOLUME = {8},
      YEAR = {1969},
     PAGES = {98--110; addendum, ibid. 8 (1969), 355},
      ISSN = {0020-9910,1432-1297},
   MRCLASS = {57.10},
  MRNUMBER = {253348},
MRREVIEWER = {R.\ H.\ Fox},
       DOI = {10.1007/BF01404613},
       URL = {https://doi.org/10.1007/BF01404613},
}

@book {MKS,
    AUTHOR = {Magnus, Wilhelm and Karrass, Abraham and Solitar, Donald},
     TITLE = {Combinatorial group theory},
   EDITION = {second},
      NOTE = {Presentations of groups in terms of generators and relations},
 PUBLISHER = {Dover Publications, Inc., Mineola, NY},
      YEAR = {2004},
     PAGES = {xii+444},
      ISBN = {0-486-43830-9},
   MRCLASS = {20E05 (05A15 20E06 20F12)},
  MRNUMBER = {2109550},
}

@article {MillerD,
    AUTHOR = {Miller, David},
     TITLE = {An extension of {M}ilnor's {$\overline \mu$}-invariants},
   JOURNAL = {Topology Appl.},
  FJOURNAL = {Topology and its Applications},
    VOLUME = {65},
      YEAR = {1995},
    NUMBER = {1},
     PAGES = {69--82},
      ISSN = {0166-8641,1879-3207},
   MRCLASS = {57M25 (57M10)},
  MRNUMBER = {1354382},
MRREVIEWER = {Kimihiko\ Motegi},
       DOI = {10.1016/0166-8641(94)00098-N},
       URL = {https://doi.org/10.1016/0166-8641(94)00098-N},
}

@incollection {Milnor,
    AUTHOR = {Milnor, John},
     TITLE = {Isotopy of links},
 BOOKTITLE = {Algebraic geometry and topology. {A} symposium in honor of
              {S}. {L}efschetz},
     PAGES = {280--306},
 PUBLISHER = {Princeton Univ. Press, Princeton, NJ},
      YEAR = {1957},
   MRCLASS = {55.0X},
  MRNUMBER = {92150},
MRREVIEWER = {W.\ S.\ Massey},
}

@article {schneidermanteichner,
    AUTHOR = {Schneiderman, Rob and Teichner, Peter},
     TITLE = {Homotopy versus isotopy: spheres with duals in 4-manifolds},
   JOURNAL = {Duke Math. J.},
  FJOURNAL = {Duke Mathematical Journal},
    VOLUME = {171},
      YEAR = {2022},
    NUMBER = {2},
     PAGES = {273--325},
      ISSN = {0012-7094,1547-7398},
   MRCLASS = {57R52},
  MRNUMBER = {4375617},
MRREVIEWER = {J.\ P. E. Hodgson},
       DOI = {10.1215/00127094-2021-0016},
       URL = {https://doi.org/10.1215/00127094-2021-0016},
}

@article {schwartz,
    AUTHOR = {Schwartz, Hannah R.},
     TITLE = {Equivalent non-isotopic spheres in 4-manifolds},
   JOURNAL = {J. Topol.},
  FJOURNAL = {Journal of Topology},
    VOLUME = {12},
      YEAR = {2019},
    NUMBER = {4},
     PAGES = {1396--1412},
      ISSN = {1753-8416,1753-8424},
   MRCLASS = {57N70 (57K40)},
  MRNUMBER = {4138734},
MRREVIEWER = {Richard\ Stong},
       DOI = {10.1112/topo.12121},
       URL = {https://doi.org/10.1112/topo.12121},
}

@article {Stallings,
    AUTHOR = {Stallings, John},
     TITLE = {Homology and central series of groups},
   JOURNAL = {J. Algebra},
  FJOURNAL = {Journal of Algebra},
    VOLUME = {2},
      YEAR = {1965},
     PAGES = {170--181},
      ISSN = {0021-8693},
   MRCLASS = {18.20},
  MRNUMBER = {175956},
MRREVIEWER = {Hirosi\ Nagao},
       DOI = {10.1016/0021-8693(65)90017-7},
       URL = {https://doi.org/10.1016/0021-8693(65)90017-7},
}

@article {Stees26,
    AUTHOR = {Stees, Ryan},
     TITLE = {Milnor's invariants for knots and links in closed orientable
              3-manifolds},
   JOURNAL = {Geom. Topol.},
  FJOURNAL = {Geometry \& Topology},
    VOLUME = {30},
      YEAR = {2026},
    NUMBER = {3},
     PAGES = {983--1050},
      ISSN = {1465-3060,1364-0380},
   MRCLASS = {57N65},
  MRNUMBER = {5065834},
       DOI = {10.2140/gt.2026.30.983},
       URL = {https://doi.org/10.2140/gt.2026.30.983},
}

@Article{Stees25a,
  author={Stees, Ryan},
  Year = {2025},
  eprint = {https://arxiv.org/abs/2508.14638},
  status={preprint},
  title={Almost-concordance of knots in aspherical 3-manifolds},
}

@article {stong,
    AUTHOR = {Stong, Richard},
     TITLE = {Uniqueness of {$\pi_1$}-negligible embeddings in
              {$4$}-manifolds: a correction to {T}heorem 10.5 of {\it
              {T}opology of 4-manifolds} [{P}rinceton {U}niv. {P}ress,
              {P}rinceton, {NJ}, 1990; {MR}1201584 (94b:57021)] by {M}. {H}.
              {F}reedman and {F}. {Q}uinn},
   JOURNAL = {Topology},
  FJOURNAL = {Topology. An International Journal of Mathematics},
    VOLUME = {32},
      YEAR = {1993},
    NUMBER = {4},
     PAGES = {677--699},
      ISSN = {0040-9383},
   MRCLASS = {57N15},
  MRNUMBER = {1241868},
MRREVIEWER = {Ian\ Hambleton},
       DOI = {10.1016/0040-9383(93)90046-X},
       URL = {https://doi.org/10.1016/0040-9383(93)90046-X},
}

@article {sunukjian,
    AUTHOR = {Sunukjian, Nathan S.},
     TITLE = {Surfaces in 4-manifolds: concordance, isotopy, and surgery},
   JOURNAL = {Int. Math. Res. Not. IMRN},
  FJOURNAL = {International Mathematics Research Notices. IMRN},
      YEAR = {2015},
    NUMBER = {17},
     PAGES = {7950--7978},
      ISSN = {1073-7928,1687-0247},
   MRCLASS = {57R52 (57R65)},
  MRNUMBER = {3404006},
MRREVIEWER = {J.\ P. E. Hodgson},
       DOI = {10.1093/imrn/rnu187},
       URL = {https://doi.org/10.1093/imrn/rnu187},
}

\end{document}